\documentclass[12pt]{article}
\usepackage{amsmath,amsthm,amsfonts,amssymb}
\usepackage{fullpage}
\newtheorem{theorem}{Theorem}[section]
\newtheorem{lemma}{Lemma}[section]
\newtheorem{cor}{Corollary}[section]

\newtheorem{ex}{Example}[section]
\title{A note on linear preservers on semipositive and minimal semipositive matrices}

\author{Projesh Nath Choudhury \thanks{ Department of Mathematics, Indian Institute of Technology Madras, Chennai, India. Email: cn.projesh@gmail.com}  \and M. Rajesh Kannan\thanks{Department of Mathematics, Indian Institute of Technology Kharagpur, Kharagpur, India. Email: rajeshkannan@maths.iitkgp.ernet.in, rajeshkannan1.m@gmail.com } \and  K.C. Sivakumar\thanks{Department of Mathematics, Indian Institute of Technology Madras, Chennai, India. Email: kcskumar@iitm.ac.in}
}

\date{\today}
\begin{document}
\maketitle
\baselineskip=0.25in
\begin{abstract}
Semipositive matrices (matrices that map at least one nonnegative vector to a positive vector) and minimally semipositive
  matrices (semipositive matrices whose no column-deleted submatrix is semipositive) are well studied in matrix theory.
In this short note, we study the structure of linear maps which preserve the set of all semipositive and minimal semipositive matrices. 
\end{abstract}

{\bf AMS Subject Classification (2010):} 15A86, 15B48.

{\bf Keywords:} Linear preservers; Semipositive matrix; Minimally semipositive matrix;  Monomial matrix.
\section{Introductory remarks}
Let $\mathbb{R}^{m\times n}$ denote the set of all $m\times n$ matrices over the real numbers. For an $m \times n$ matrix $A=(a_{ij})$; $A\geq 0$ means $a_{ij}\geq 0$. Similarly, $A>0$ means all the entries of $A$ are positive. A matrix $A\in \mathbb{R}^{m\times n}$ is said to be  \emph{semipositive},  if there exists a vector $x \geq 0$ such that $Ax > 0$. A matrix $A \in \mathbb{R}^{m\times n}$ is said to be  \emph{minimally semipositive} if it is semipositive and no column-deleted submatrix of $A$ is semipositive. By a simple perturbation argument, it follows that $A$ is semipositive if and only if there exists a vector $x\in \mathbb{R}^n$ with $x>0$ such that $Ax>0$ \cite[Lemma 2.1]{john}. Such a vector $x$ is called a \emph{semipositivity vector} of $A$. It is known that a square matrix $A$ is minimally semipositive if and only if $A^{-1}$ exists and $A^{-1}\geq 0$ \cite[Theorem 3.4]{john}. More generally, an $m\times n~(m\geq n)$ matrix $A$ is minimally semipositive if and only if $A$ is semipositive and has a nonnegative left inverse (\cite[Theorem 3.6]{john}, \cite[Theorem 2.3]{werner}). A matrix $A \in \mathbb{R}^{n\times n}$ is called  \emph{row positive} if $A\geq 0$ and each row of $A$ contains a nonzero entry. A matrix $A \in \mathbb{R}^{n\times n}$ is called  \emph{inverse nonnegative} if $A^{-1}$ exists and $A^{-1}\geq 0$. A square matrix $A$ is called \emph{monomial}, if $A\geq 0$ and every column and row of $A$ contains exactly one nonzero entry. Let us also recall a result that characterizes monomial matrices. An $n \times n$ matrix $A$ is monomial  if and only if $A$ is row positive and inverse nonnegative \cite[Lemma~2.6]{john-main}.

A linear operator $L:\mathbb{R}^{m\times n}\longrightarrow \mathbb{R}^{m\times n}$ is said to be an 
\emph{into linear preserver} of some set $S$ of matrices, if $L(S)\subseteq S$. A linear operator $L:\mathbb{R}^{m\times n}\longrightarrow \mathbb{R}^{m\times n}$ is said to be an \emph{onto linear preserver} of some set $S$ of matrices, if $L(S)=S$. There is a vast literature on linear preserver problems. For more details we refer to \cite{gut-li-semrl, li-pier}.

In \cite{john-main}, the authors considered the linear preserver problems for the set of all semipositive and minimal semipositive matrices. Consider the map $L: \mathbb{R}^{m \times n} \rightarrow \mathbb{R}^{m \times n}$ defined by $L(A)=XAY$ for some $X\in \mathbb{R}^{m \times m}$ and $Y\in \mathbb{R}^{n \times n}$.
\begin{enumerate}
\item  Then $L$ is an into linear preserver of semipositive matrices if and only if either $X$ is row positive and 
$Y$ is inverse nonnegative or $-X$ is row positive and $-Y$ is inverse nonnegative \cite[Theorem 2.4]{john-main} and
$L$ is an onto linear preserver of semipositive matrices if and only if either $X$ and $Y$ are monomial or  $-X$ and $-Y$ are monomial \cite[Corollary 2.7]{john-main}.
\item  If $m>n$, then $L$ is an into linear preserver of  minimal semipositive matrices if and only if either
$X$ is monomial and $Y$ is inverse nonnegative or $-X$ is monomial and $-Y$ is inverse nonnegative  and $L$ is an onto linear preserver of  minimal semipositive matrices if and only if either $X$ and $Y$ are monomial or  $-X$ and $-Y$ are monomial \cite[Theorem 2.11]{john-main}.
\end{enumerate}

The purpose of this short note is to revisit linear preservers of semipositive and minimal semipositive matrices. This note is organized as follows: First we show that if a vector $v\in\mathbb{R}^n$ contains both positive and negative entries and $0\neq w\in\mathbb{R}^n$, then there exists a nonnegative invertible matrix $B$ such that $Bv=w$ (Theorem \ref{np}). A similar type result for $v\geq 0$ and $w>0$ is proved in Theorem \ref{pos}. In view of the statement (2) a natural  question arises: can we get a necessary and sufficient condition for $L$ to be an into linear preserver of  minimal semipositive matrices, when $m=n$. We answer this question in Theorem \ref{mainresult1}. In Theorem \ref{mainresult2}, we give a necessary and sufficient condition for $L$ to be an onto linear preserver of  minimal semipositive matrices, when $m=n$. In the process we point to an error in \cite{john-main} and provide a correct proof.

\section{Main Results}

The first result characterize onto linear preserver of some set $S$ of matrices in terms of into linear preserver. In \cite{john-main}, it is mentioned that the following result is presented in \cite{die}. For the sake of completeness, we include a proof here.
\begin{lemma}\label{basis-onto-into}
If $S$ contains a basis for $\mathbb{R}^{m\times n}$, then $L$ is an onto linear preserver of $S$ if and only if $L$ and $L^{-1}$ are into  linear preservers on $S$.
\end{lemma}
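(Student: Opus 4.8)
The plan is to prove the two implications separately, the only delicate point being to identify where the hypothesis that $S$ contains a basis is actually used.

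For the forward direction, suppose $L$ is an onto linear preserver of $S$, i.e.\ $L(S)=S$. The first step is to extract invertibility of $L$ from this: choose a basis $\{A_1,\dots,A_{mn}\}$ of $\mathbb{R}^{m\times n}$ with each $A_i\in S$. Since $A_i\in S=L(S)$, there is some $C_i\in S$ with $L(C_i)=A_i$, so the range of $L$ contains a basis and hence equals all of $\mathbb{R}^{m\times n}$; a surjective linear endomorphism of a finite-dimensional space is bijective, so $L^{-1}$ exists. Now $L(S)\subseteq S$ is immediate from $L(S)=S$, and for any $T\in S$, using $L(S)=S$ again there is $T'\in S$ with $L(T')=T$, i.e.\ $L^{-1}(T)=T'\in S$; thus $L^{-1}(S)\subseteq S$. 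Hence both $L$ and $L^{-1}$ are into linear preservers of $S$.

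For the converse, assume $L$ and $L^{-1}$ are into linear preservers of $S$; in particular $L$ is invertible. From $L(S)\subseteq S$ and $L^{-1}(S)\subseteq S$, applying $L$ to the second inclusion gives $S=L(L^{-1}(S))\subseteq L(S)\subseteq S$, so $L(S)=S$, i.e.\ $L$ is an onto linear preserver of $S$.

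I do not expect a genuine obstacle in this argument; it is essentially a bookkeeping exercise. The one point that requires care is that invertibility of $L$ — which is needed even to speak of $L^{-1}$ in the statement — does not follow from $L(S)=S$ for an arbitrary set $S$ (take $S=\{0\}$ and $L=0$), and this is exactly where the assumption that $S$ spans $\mathbb{R}^{m\times n}$ enters.
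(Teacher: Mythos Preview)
Your argument is correct and matches the paper's own proof essentially line for line, with you simply being more explicit about why $L(S)=S$ together with the spanning hypothesis forces $L$ to be invertible, and about the double inclusion in the converse. No changes are needed.
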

\begin{proof}
If $L$ is an onto linear preserver on $S$, then $L(S)$ contains a basis for $\mathbb{R}^{m\times n}$. Thus $L$ is invertible, $L(S) \subseteq S$ and $L^{-1}(S) \subseteq S$. Conversely, if $L(S) \subseteq S$ and $L^{-1}(S) \subseteq S$, then it is clear that $L(S) = S$.
\end{proof}

\begin{theorem}\label{np}
Let $v,w \in \mathbb{R}^{n}$ such that $v$ contains both positive and negative entries and $w\neq 0$. Then there exists a nonnegative invertible matrix $B$ such that $Bv=w.$
\end{theorem}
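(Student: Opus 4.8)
\medskip
\noindent\emph{Proof idea.}
The plan is to construct $B$ by hand, one row at a time, inserting an auxiliary parameter $t>0$ whose sole purpose is to force invertibility. Producing \emph{some} nonnegative $B$ with $Bv=w$ is free: fix indices $j_0,k_0$ with $v_{j_0}>0$ and $v_{k_0}<0$; then for every scalar $c$ the vector $(c/v_{j_0})e_{j_0}$ (when $c\ge 0$) or $(c/v_{k_0})e_{k_0}$ (when $c<0$) is nonnegative and has inner product $c$ with $v$, so stacking the choices $c=w_1,\dots,w_n$ as rows gives a nonnegative matrix sending $v$ to $w$. The actual content is to achieve this with an \emph{invertible} matrix.

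The key step, and the only place the sign-change hypothesis on $v$ enters, is the claim that \emph{$v^{\perp}$ has a basis of nonnegative vectors}. I would prove it by exhibiting $n-1$ explicit nonnegative vectors orthogonal to $v$: the standard basis vectors $e_i$ for each $i$ with $v_i=0$; the vectors $(-v_{k_0})e_j+v_je_{k_0}$ for each $j$ with $v_j>0$; and the vectors $(-v_k)e_{j_0}+v_{j_0}e_k$ for each $k\neq k_0$ with $v_k<0$. Each is nonnegative and orthogonal to $v$, there are exactly $n-1$ of them, and a brief coordinatewise check shows they are linearly independent; since $\dim v^{\perp}=n-1$, they form the desired basis $c_1,\dots,c_{n-1}$.

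With $c_1,\dots,c_{n-1}$ fixed, pick $p$ with $w_p\neq 0$ (possible because $w\neq 0$), fix a bijection $\sigma\colon\{1,\dots,n\}\setminus\{p\}\to\{1,\dots,n-1\}$, and let $d_i\ge 0$ be the row vector from the first paragraph with $\langle d_i,v\rangle=w_i$. For $t>0$, let $B_t$ be the matrix whose $p$-th row is $d_p$ and whose $i$-th row ($i\neq p$) is $d_i+t\,c_{\sigma(i)}$. Then $B_t\ge 0$ and $B_tv=w$ are immediate. Since $\langle d_p,v\rangle=w_p\neq 0$ we have $d_p\notin v^{\perp}$, so $\{d_p,c_1,\dots,c_{n-1}\}$ is a basis of $\mathbb{R}^n$; rewriting the rows of $B_t$ in this basis makes the coordinate vector of its $p$-th row equal to $(1,0,\dots,0)$, and expanding the determinant along that row identifies $\det B_t$, up to a fixed nonzero factor, with $\det(U+tQ)$ for a fixed matrix $U$ and the permutation matrix $Q$ induced by $\sigma$. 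This is a polynomial in $t$ whose coefficient of $t^{n-1}$ equals $\det Q=\pm1\neq0$; hence it is not identically zero, $B_t$ is invertible for all but finitely many $t$, and any such $t>0$ gives the required $B=B_t$.

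The main obstacle is exactly the invertibility; everything else is bookkeeping. It is resolved by the key step above: once $v^{\perp}$ is known to carry a basis of nonnegative ``null directions'', the parameter $t$ pushes distinct rows along distinct nonnegative directions, and the permutation structure of $Q$ guarantees that $\det B_t$ is a genuinely nonzero polynomial in $t$. A minor point to watch is the sign bookkeeping that keeps each $d_i$ nonnegative irrespective of the sign of $w_i$.
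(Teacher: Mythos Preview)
Your argument is correct, and it is genuinely different from the paper's. The paper builds $B$ by an explicit, entry-by-entry case analysis: after permuting so that $v_1>0$, $v_n<0$, $w_1\neq 0$, it writes down the $i$-th row of $B$ according to the signs of $v_i$ and $w_i$ (nine cases for $1<i<n$, plus separate cases for the first and last rows), arranging that for each middle index $i$ the $i$-th column is hit only by the $i$-th row, which forces linear independence directly.

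Your route isolates a clean structural fact that the paper never states: $v^{\perp}$ has a basis of nonnegative vectors precisely because $v$ changes sign. From there you get invertibility by a one-parameter perturbation and a leading-coefficient argument, rather than by bookkeeping on supports. The trade-off is that the paper's proof is fully explicit (it hands you a single concrete $B$), whereas yours is existential in $t$; on the other hand, your argument is shorter, avoids the nine-way case split, and the key lemma about $v^{\perp}$ is of independent interest and explains \emph{why} the sign-change hypothesis is exactly what is needed. Your determinant computation is fine: with $M$ the matrix whose rows are $d_p,c_1,\dots,c_{n-1}$ one has $B_t=C_tM$, and expanding $\det C_t$ along its $p$-th row yields $\pm\det(U+tQ)$ with $t^{n-1}$-coefficient $\operatorname{sgn}(\sigma)\neq 0$.
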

\begin{proof}
Without loss of generality assume that, $v_1>0, v_n<0$ and $w_1\neq 0$. Now, we construct the matrix $B$ as follows:

\textbf{Step 1:}
 First row of the matrix $B$ is constructed as follows:
\begin{itemize}
\item[(a)] If $w_1>0$, then the first row is $(\frac{w_1}{v_1},0, \ldots, 0)$.
\item[(b)] If $w_1<0$, then the first row is $(0, \ldots, 0, \frac{w_1}{v_n})$.
\end{itemize}
From the construction, it is clear that the first row of $B$ is nonnegative.

\textbf{Step 2:}
 Now, we construct $i^{th}$ row for $1<i<n$ as follows:
\begin{itemize}
\item[(a)] If $w_i>0$ and  $v_i>0$, then $i^{th}$ row is $(\frac{w_i}{2v_1},0,\ldots, \frac{w_i}{2v_i}, \ldots, 0).$
\item[(b)] If $w_i>0$ and  $ v_i<0$, then $i^{th}$ row is $(\frac{w_i +1}{v_1},0,\ldots, \frac{-1}{v_i}, \ldots, 0).$
\item[(c)] If $w_i>0$ and  $ v_i=0$, then $i^{th}$ row is $(\frac{w_i}{v_1},0,\ldots, 1, \ldots, 0).$
\item[(d)] If $w_i<0$ and  $ v_i>0$, then $i^{th}$ row is $(0,\ldots, \frac{1}{v_i}, \ldots, \frac{w_i-1}{v_n}).$
\item[(e)] If $w_i<0$ and  $ v_i<0$, then $i^{th}$ row is $(0,\ldots, \frac{w_i}{2v_i}, \ldots, \frac{w_i}{2v_n}).$
\item[(f)] If $w_i<0$ and  $ v_i=0$, then $i^{th}$ row is $(0,\ldots, 1, \ldots, \frac{w_i}{v_n}).$
\item[(g)] If $w_i=0$ and  $ v_i>0$, then $i^{th}$ row is $(0,\ldots, \frac{1}{v_i}, \ldots, \frac{-1}{v_n}).$
\item[(h)] If $w_i=0$ and  $ v_i<0$, then $i^{th}$ row is $(\frac{1}{v_1},0,\ldots, \frac{-1}{v_i}, \ldots, 0).$
\item[(i)] If $w_i=0$ and  $ v_i=0$, then $i^{th}$ row is $(0,\ldots, 1, \ldots, 0).$
\end{itemize}
For $2 \leq i \leq n-1$, in the above construction, the $i^{th}$ entry is nonzero only in the $i^{th}$ row. This establishes the linear independence of the rows from $1$ to $n-1$.

\textbf{Step 3:}
 Now, let us construct the $n^{th}$ row of the matrix $B$.
\begin{itemize}
\item[(a)] If $w_n>0$ and  $ w_1 < 0$, then the $n^{th}$ row is $(\frac{w_n}{v_1},0, \ldots, 0)$.
\item[(b)] If $w_n>0 $ and  $ w_1> 0$, then the $n^{th}$ row is $(\frac{w_n+1}{v_1},0, \ldots, \frac{-1}{v_n})$.
\item[(c)] If $w_n<0$ and  $ w_1< 0$, then the $n^{th}$ row is $(\frac{1}{v_1},0, \ldots, \frac{w_n-1}{v_n})$.
\item[(d)] If $w_n<0$ and  $ w_1> 0$, then the $n^{th}$ row is $(0, \ldots, \frac{w_n}{v_n})$.
\item[(e)] If $w_n=0$,  then the $n^{th}$ row is $(\frac{1}{v_1},0, \ldots, \frac{-1}{v_n})$.\\
\end{itemize}
From the construction, it is clear the rows of the matrix $B$ are linear independent, nonnegative and $Bv = w.$

%
\end{proof}
Let us illustrate construction of the previous result, by an example.
\begin{ex}
Let $v=(1, 0, -5, -1)^T$ and $w=(3, 2, -10, 0)^T$. Then $v$ contains both positive and negative entries and $w\neq 0$. 
Since $v_1=1>0$ and $w_1=3>0$, the first row of $B$ is $(3,0,0,0)$.
Now $v_2=0$ and $w_2=2>0$, so the second row of $B$ is $(2,1,0,0)$. 
Again $v_3=-5<0$ and $w_3=-10<0$, so the third row of $B$ is $(0,0, 1,5)$. 
Finally $v_4=-1<0$ and $w_4=0$, so the last row of $B$ is $(1,0,0,1)$. Thus the matrix \begin{center} $B = \left(
\begin{array}{cccc}
3 & 0 & 0 & 0 \\
2 & 1 & 0 & 0\\
0 & 0 & 1 & 5\\
1  & 0 & 0 & 1\end{array} \right)$
\end{center}
is nonnegative, invertible and $Bv=w$.
\end{ex}

Let $e_i$ denote the vector in $\mathbb{R}^n$ whose $i^{th}$ entry is $1$ and others entries are zero.
\begin{theorem}\label{pos}
Let $v,w \in \mathbb{R}^{n}$ such that $v\geq 0$ and $w> 0$. Then there exists a nonnegative invertible matrix $B$ such that $Bv=w.$
\end{theorem}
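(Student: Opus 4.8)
The plan is to follow the row-by-row construction of Theorem~\ref{np}, which becomes much simpler here because $v$ has no negative entries. Note first that $v \neq 0$ is forced, since $Bv = w > 0$; after composing with a suitable permutation matrix (which is nonnegative and invertible, hence can be absorbed into $B$) we may assume $v_1 > 0$. I will produce $B$ one row at a time so that the $i^{th}$ row $b_i^{T}$ is nonnegative and satisfies $b_i^{T} v = w_i$, and so that the resulting rows are linearly independent.

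Take $b_1^{T} = (w_1/v_1)\,e_1^{T}$, which is nonnegative because $w_1 > 0$ and $v_1 > 0$, and has $b_1^{T} v = w_1$. For $2 \le i \le n$ there are only two cases, since $v_i \ge 0$. If $v_i > 0$, put $b_i^{T} = (w_i/v_i)\,e_i^{T}$; then $b_i \ge 0$ and $b_i^{T} v = w_i$. If $v_i = 0$, put $b_i^{T} = (w_i/v_1)\,e_1^{T} + e_i^{T}$; then $b_i \ge 0$ (again using $w_i > 0$ and $v_1 > 0$) and $b_i^{T} v = (w_i/v_1)v_1 + v_i = w_i$. Thus $B \ge 0$ and $Bv = w$ by construction.

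For invertibility, observe that $B$ is block lower-triangular for the partition $\{1\}\cup\{2,\dots,n\}$ of the coordinates: the first row is supported on coordinate $1$, so the block in rows $\{1\}$ and columns $\{2,\dots,n\}$ vanishes; the $(1,1)$ entry is $w_1/v_1 \ne 0$; and the block in rows $\{2,\dots,n\}$ and columns $\{2,\dots,n\}$ is diagonal, its $i^{th}$ diagonal entry being $w_i/v_i$ or $1$, hence nonzero. Therefore $\det B = (w_1/v_1)\prod_{i=2}^{n} b_{ii} \ne 0$, so $B$ is invertible.

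The only delicate point — the counterpart of the single genuine difficulty in Theorem~\ref{np} — is meeting the two requirements $b_i^{T}v = w_i$ and linear independence of the rows at the same time; this is handled by inserting the auxiliary vector $e_i$ into $b_i$ precisely when $v_i = 0$, so that it buys independence without affecting $b_i^{T}v$, whereas a single scaled copy of $e_i$ already does both jobs when $v_i > 0$. No obstacle remains beyond this bookkeeping, which is exactly why the $v \ge 0$ hypothesis makes this case lighter than the general one.
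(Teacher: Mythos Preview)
Your proof is correct and follows essentially the same approach as the paper's: both reduce (by permutation) to $v_1>0$ and build a nonnegative lower-triangular matrix with nonzero diagonal by routing each $w_i$ through column~$1$ and/or column~$i$. Your version is in fact a slight simplification, since for $v_i>0$ you use column~$i$ alone rather than splitting $w_i$ between columns $1$ and $i$ with a factor of $\tfrac12$.
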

\begin{proof}
Without loss of generality assume that  first $k$ entries of the vector $v$ are positive. Now, we construct the matrix $B$ as follows:
\begin{eqnarray}
Be_1&=&\left(\frac{w_1}{v_1},\frac{w_2}{2v_1},\ldots,\frac{w_k}{2v_1},\frac{w_{k+1}}{v_1},\frac{w_{k+2}}{v_1},\ldots, \frac{w_n}{v_1}\right)^T \nonumber \\
Be_2&=& \frac{w_2}{2v_2}e_2 \nonumber \\
\vdots \nonumber\\
Be_k&=& \frac{w_k}{2v_k}e_k \nonumber \\
Be_{k+1}&=& e_{k+1} \nonumber \\
\vdots \nonumber\\
Be_{n}&=& e_{n}. \nonumber
\end{eqnarray}
Thus $B$ is nonnegative lower triangular matrix whose diagonal entries are nonzero. So $B$ is invertible and $Bv=w.$
\end{proof}
Using Theorem \ref{np} and Theorem \ref{pos}, we give an alternate simple proof of Lemma 2.10 \cite{john-main} in the next result.
\begin{cor}
Let $v\in \mathbb{R}^{n}$ and $w\in \mathbb{R}^{m}$ with $n>m$. If $v$ contains both positive and negative entries, then there exists a nonnegative full row rank matrix $B$ such that $Bv=w.$ The same holds if $v\geq 0$ and $w>0$.
\end{cor}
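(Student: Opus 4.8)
The plan is to reduce the non-square situation to the square results already in hand, Theorem~\ref{np} and Theorem~\ref{pos}. The key observation is that if $\tilde B\in\mathbb{R}^{n\times n}$ is invertible then its rows form a basis of $\mathbb{R}^n$, so \emph{any} $m$ of them are linearly independent; consequently the $m\times n$ matrix $B$ obtained from $\tilde B$ by deleting its last $n-m$ rows has full row rank. This $B$ inherits nonnegativity from $\tilde B$, and if $\tilde B v=\tilde w$ then $Bv$ is exactly the vector formed by the first $m$ coordinates of $\tilde w$. Hence it suffices to produce a suitable $n$-dimensional target $\tilde w$ whose first $m$ coordinates are those of $w$, and then quote the appropriate square case.

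For the first assertion I would pick $\tilde w\in\mathbb{R}^n$ whose first $m$ coordinates agree with $w$ and whose remaining $n-m\ (\geq 1)$ coordinates are chosen so that $\tilde w\neq 0$; for instance, if $w=0$ one simply sets the $(m+1)$st coordinate of $\tilde w$ equal to $1$. Since $v$ contains both a positive and a negative entry and $\tilde w\neq 0$, Theorem~\ref{np} yields a nonnegative invertible $\tilde B\in\mathbb{R}^{n\times n}$ with $\tilde B v=\tilde w$. Deleting the last $n-m$ rows of $\tilde B$ gives a nonnegative full-row-rank $B$ with $Bv=w$.

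For the second assertion ($v\geq 0$, $w>0$) I would instead extend $w$ to $\tilde w\in\mathbb{R}^n$ by appending $n-m$ positive coordinates (all equal to $1$, say), so that $\tilde w>0$. Theorem~\ref{pos} then provides a nonnegative invertible $\tilde B\in\mathbb{R}^{n\times n}$ with $\tilde B v=\tilde w$, and once again the first $m$ rows of $\tilde B$ form a nonnegative matrix $B$ of full row rank with $Bv=w$.

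There is no real obstacle here: all the work is carried by the two square-case theorems, and the reduction is a one-line linear-algebra fact about rows of an invertible matrix. The only point needing a little care is the padding step in the first assertion — the extended vector $\tilde w$ must be arranged to be nonzero so that Theorem~\ref{np} is applicable even when $w=0$, which is possible precisely because $n>m$.
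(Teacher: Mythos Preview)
Your proposal is correct and follows essentially the same approach as the paper: extend $w$ to a suitable $n$-vector (nonzero in the first case, positive in the second), invoke the square-case theorems to obtain a nonnegative invertible $n\times n$ matrix, and keep its first $m$ rows. The paper's proof is terser but identical in substance, appending an arbitrary nonzero $x\in\mathbb{R}^{n-m}$ to $w$ to guarantee the extended target is nonzero.
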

\begin{proof}
Let $0\neq x\in \mathbb{R}^{n-m}$. Then, the vector  $w^\prime=(w^T,x^T)^T \in \mathbb{R}^{n}$ and
$w^\prime \neq 0$. By Theorem \ref{np}, there exists a nonnegative invertible matrix $B^\prime$ such that 
$B^\prime v=w^\prime$. Let $B^\prime=(B^T, C^T)^T$, where $B\in \mathbb{R}^{m \times n}$.
Then $B\geq 0$ and $Bv=w$.  Since $B^\prime$ is invertible,  $B$ has full row rank. 
Similarly one can proof second part using Theorem \ref{pos}.
\end{proof}
Next, we consider linear preserver of minimal semipositive matrices of the form $L(A)=XAY$,
for some fixed $X\in \mathbb{R}^{m\times m}$ and $Y\in \mathbb{R}^{n\times n}$.
In \cite[Theorem 2.11]{john-main}, the authors claimed that, if $m>n$, then $L$ is an into linear preserver of
minimal semipositive matrices if and only if either $X$ is monomial and $Y$ is inverse nonnegative or $-X$ is monomial
and $-Y$ is inverse nonnegative. But the converse in the above statement is not true. This is illustrated next.
\begin{ex}
Let $X=\begin{pmatrix}
1 & 1\\ 1 & 1
\end{pmatrix}$ and $Y=[1]$. Since $2 \times 1$ minimal semipositive matrices are only positive column,
so $L(A)=XAY$ is into linear preserver of minimal semipositive matrices. But $X$ is not monomial.
\end{ex}

In the next result, we establish a necessary and sufficient condition for $L(A)$ to be
an into linear preserver of  minimal semipositive matrices, when $m=n$.
\begin{lemma}\label{key1}

 Let $X \in \mathbb{R}^{n \times n}$ be such that both $X$ and $-X$ are not inverse nonnegative. Then there exists a vector
 $v \in \mathbb{R}^n$ such that $v \ngeq 0$, $v \nleq 0$ and $Xv \geq 0$.
\end{lemma}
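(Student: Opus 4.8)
The plan is to recast the two hypotheses as the existence of two ``one--sided'' vectors in the cone $\{z : Xz\ge 0\}$ and then interpolate between them. A preliminary observation I would make is that invertibility of $X$ is genuinely needed here: for a singular matrix such as $X=\bigl(\begin{smallmatrix}1&-1\\-1&1\end{smallmatrix}\bigr)$ the solution set of $Xz\ge 0$ is the line $\mathbb{R}(1,1)^{T}$, which contains no vector with entries of both signs, so some hypothesis of this kind must be in force. Working then under the (harmless, in the intended application) assumption that $X$ is invertible, ``$X$ is not inverse nonnegative'' says precisely that $X^{-1}$ has a strictly negative entry, say $(X^{-1})_{pj}<0$, and ``$-X$ is not inverse nonnegative'' says $(-X)^{-1}=-X^{-1}$ has a strictly negative entry, i.e.\ $X^{-1}$ has a strictly positive entry, say $(X^{-1})_{qk}>0$. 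Putting $a=X^{-1}e_j$ and $b=X^{-1}e_k$ gives $Xa=e_j\ge 0$, $Xb=e_k\ge 0$, $a_p<0$, and $b_q>0$; and if $j=k$ then $a=b$ already has entries of both signs, so I may assume $j\ne k$.

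The main device is the one--parameter family $v(t)=a+tb=X^{-1}(e_j+t\,e_k)$ for $t\ge 0$, along which $Xv(t)=e_j+t\,e_k\ge 0$ holds identically; the task reduces to choosing $t$ so that $v(t)$ has entries of both signs. The two coordinates to track are the affine functions $t\mapsto v(t)_p$, which is strictly negative at $t=0$, and $t\mapsto v(t)_q$, which tends to $+\infty$ because $b_q>0$; each changes sign at most once. A short case split --- according to whether $v(\cdot)_p$ ever turns positive, and if so whether it does so before or after $v(\cdot)_q$ turns positive --- yields in every case an interval of values of $t$ for which $v(t)$ has one strictly positive and one strictly negative coordinate, and any such $t$ finishes the argument.

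The one case I expect to be the genuine obstacle is when $v(\cdot)_p$ and $v(\cdot)_q$ vanish at the \emph{same} parameter $t^{\ast}>0$: there both monitored coordinates of $v(t^{\ast})$ are zero, so mere linear interpolation between the two one--sided witnesses does not by itself produce a mixed--sign vector. To handle it I would again invoke invertibility of $X$: since $e_j+t^{\ast}e_k\ne 0$, the vector $v(t^{\ast})=X^{-1}(e_j+t^{\ast}e_k)$ is nonzero, hence has a nonzero coordinate $v(t^{\ast})_r$ with $r\notin\{p,q\}$; moving $t$ slightly off $t^{\ast}$ --- downward if $v(t^{\ast})_r>0$, upward if $v(t^{\ast})_r<0$ --- preserves $Xv(t)\ge 0$ and makes $v(t)_r$ and $v(t)_p$ have strictly opposite signs, which is exactly the vector we want. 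Everything outside this case is routine sign bookkeeping.
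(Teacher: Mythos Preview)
Your argument is correct. Both your proof and the paper's share the same skeleton: produce two witnesses $a,b$ (your columns $X^{-1}e_j,X^{-1}e_k$; the paper's generic $u,w$) lying in the cone $\{z:Xz\ge 0\}$, one with a negative entry and one with a positive entry, and then take a nonnegative combination that has entries of both signs. The execution differs. After reducing to the case $u\le 0$, $w\ge 0$, the paper simply picks an invertible $2\times 2$ submatrix $D$ of $[u\,|\,w]$ and solves $D(\alpha,\beta)^{T}=(1,-1)^{T}$; the sign pattern of $u,w$ forces $\alpha,\beta\ge 0$ automatically, so $v=\alpha u+\beta w$ satisfies $Xv\ge 0$ and $(v_1,v_2)=(1,-1)$ in one stroke, with no case analysis. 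Your one-parameter interpolation $v(t)=a+tb$ with tracking of zero-crossings reaches the same conclusion but through several subcases (including your ``obstacle'' case $t_1=t_2$, which also absorbs the possibility $p=q$). The paper's device is shorter; your route is more elementary in that it avoids the $2\times 2$ linear algebra and just uses continuity. Your opening remark that invertibility of $X$ is genuinely required --- with the singular counterexample $\bigl(\begin{smallmatrix}1&-1\\-1&1\end{smallmatrix}\bigr)$ --- is a worthwhile addition: the paper's statement does not say $X$ is invertible, and its proof silently opens with ``Since $X^{-1}$ and $-X^{-1}$ are not nonnegative\ldots'', so you have made explicit a hypothesis the paper left implicit.
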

\begin{proof}
Since $X^{-1}$ and $-X^{-1}$ are not  nonnegative, there exist vectors $u, w \in \mathbb{R}^n$ such that $Xu \geq 0$, $Xw \geq 0$, $u \ngeq 0$ and $w \nleq 0$. If either $u$  or $w$ contain both positive and negative entries, then we are done. Otherwise, $u \leq 0$ and $w \geq 0.$ Also, $u$ is not a linear multiple of $w$. Let $u=(u_1,\ldots,u_n)^T$ and $w=(w_1,\ldots,w_n)^T$. So, without loss of generality, let us assume  that the matrix  $D =\left[
                     \begin{array}{cc}
                       u_1 & w_1 \\
                       u_2 & w_2
                     \end{array}
                   \right]$ is invertible. If $det(D) > 0$, then define $\alpha = \frac{w_1 + w_2}{det(D)}$ and $\beta = - \frac{u_1 + u_2}{det(D)}$. Now, $D\left[
                     \begin{array}{c}
                       \alpha \\
                        \beta
                     \end{array}
                   \right] = \left[
                     \begin{array}{c}
                       1 \\
                       - 1
                     \end{array} \right]$. Set $v = \alpha u + \beta w$, then $v$ contains both positive and negative entries, and $Xv \geq 0$. If $det(D) < 0$, then the proof is similar.
\end{proof}

\begin{theorem}\label{mainresult1}
Let $X,Y \in \mathbb{R}^{n \times n}$. Let  $L: \mathbb{R}^{n \times n} \rightarrow \mathbb{R}^{n \times n}$
defined by $L(A)=XAY$.  Then $L$ is an into linear preserver of minimal semipositive matrices if and only if
$X,Y$ are inverse nonnegative matrices or $-X, -Y$ are inverse nonnegative matrices.
\end{theorem}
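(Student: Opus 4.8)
The plan is to prove the two directions separately, with the hard work concentrated in the "only if" direction. For the "if" direction, suppose $X$ and $Y$ are both inverse nonnegative (the case of $-X,-Y$ being identical, since $L(A)=XAY=(-X)A(-Y)$). Let $A\in\mathbb{R}^{n\times n}$ be minimally semipositive; by the cited characterization \cite[Theorem 3.4]{john}, this means $A$ is invertible with $A^{-1}\geq 0$. Then $L(A)=XAY$ is invertible and $L(A)^{-1}=Y^{-1}A^{-1}X^{-1}$ is a product of three nonnegative matrices, hence nonnegative; so $L(A)$ is again minimally semipositive. That disposes of one direction cleanly.

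For the "only if" direction, assume $L$ is an into preserver of minimal semipositive matrices. First I would establish that $X$ and $Y$ are both invertible: the identity matrix $I$ is minimally semipositive, so $L(I)=XY$ must be minimally semipositive, hence invertible, which forces both $X$ and $Y$ to be invertible. Next I want to show $Y^{-1}\geq 0$ or $-Y^{-1}\geq 0$. Suppose not; then by Lemma 2.2 (= `key1`) applied to $Y$, there is a vector $v$ with $v\ngeq 0$, $v\nleq 0$ (i.e. $v$ has both a positive and a negative entry) and $Yv\geq 0$. Actually I would rather aim to produce a minimally semipositive $A$ whose image $XAY$ fails to be minimally semipositive, and the natural way to do this is to control the columns of $AY$. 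Here is where Theorem 2.1 (= `np`) enters: since $v$ has entries of both signs, for any nonzero target vector $w$ I can find a nonnegative invertible $B$ with $Bv=w$. I would build a minimally semipositive matrix $A$ (equivalently, an inverse-nonnegative $A$, or $A^{-1}\geq 0$) so that one column of $AY$ equals $v$ and the remaining columns are chosen to keep things nondegenerate; then $A(Yv)$ — wait, more carefully: I want a column of $A^{-1}$ or a row structure that interacts with $v=Ye_j$-type data. The cleanest route: choose $A$ so that $A^{-1}$ is row positive and inverse nonnegative hence monomial — no, that over-restricts. Let me instead use the left-inverse characterization \cite[Theorem 3.6]{john}: $XAY$ is minimally semipositive iff it is semipositive and has a nonnegative left inverse, and since everything is $n\times n$ and invertible this again reduces to $(XAY)^{-1}=Y^{-1}A^{-1}X^{-1}\geq 0$.

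So the real content is: assuming $(XAY)^{-1}\geq 0$ for every $A$ with $A^{-1}\geq 0$, deduce $Y^{-1}\geq 0$ or $-Y^{-1}\geq 0$, and symmetrically for $X$. I would argue for $Y$ first. Suppose $Y^{-1}$ and $-Y^{-1}$ are both non-nonnegative; apply Lemma 2.2 to $X$... no — I need the lemma on the matrix whose inverse-nonnegativity I am testing. Apply Lemma 2.2 to $Y^{\top}$ or directly: there is $v$ with mixed signs and $Y^{-1}v =: z$ having... hmm. Let me restructure: I claim it suffices to show $X^{-1}\geq 0$ or $-X^{-1}\geq 0$, and $Y^{-1}\geq 0$ or $-Y^{-1}\geq 0$, and then a separate short argument rules out the mixed case $X^{-1}\geq 0$, $-Y^{-1}\geq 0$. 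For the first claim about $Y$: if neither $Y^{-1}$ nor $-Y^{-1}$ is nonnegative, Lemma 2.2 gives $v$ with both signs and $Y^{-1}v\geq 0$; wait Lemma 2.2 as stated gives $Xv\geq 0$ for the hypothesized $X$, so applied to $Y^{-1}$ in place of $X$ (noting $Y^{-1}$ and $-Y^{-1}$ not inverse nonnegative means $Y$ and $-Y$ not nonnegative — not quite the hypothesis). I would apply Lemma 2.2 directly to $Y$: its hypothesis "$Y$ and $-Y$ not inverse nonnegative" is exactly "$Y^{-1},-Y^{-1}$ not nonnegative," giving $v$ mixed-sign with $Yv\geq 0$. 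Set $w_0=Yv\geq 0$. Now using Theorem 2.1 with the mixed-sign vector $v$, pick nonnegative invertible $B$ with $Bv = $ (some convenient nonzero vector, e.g. $e_1$); but I actually want to feed $v$ in as a column of something. The construction I would use: let $C$ be any monomial matrix and consider $A$ with $A^{-1} = $ a nonnegative invertible matrix having $v$-related structure. Concretely, since $v$ is mixed-sign, by Theorem 2.1 there is nonnegative invertible $B$ with $Bv=e_1$; set $A^{-1}=B$, so $A=B^{-1}$, $A$ is minimally semipositive. Then $(XAY)^{-1}=Y^{-1}B X^{-1}$, and its first column is $Y^{-1}B X^{-1}e_1$... this is getting complicated because $v=Yv$'s preimage isn't directly $e_1$.

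Given the space constraints, I will present the skeleton and flag the obstacle: the main obstacle is the bookkeeping in the "only if" direction — turning the mixed-sign vector supplied by Lemma 2.2 into a concrete minimally semipositive $A$ whose image under $L$ provably has a non-nonnegative inverse (equivalently, is not minimally semipositive). The key idea is that $X^{-1}$ acting on a nonnegative vector $Yv$... let me just commit: apply Lemma 2.2 to $Y$ to get mixed-sign $v$ with $Yv\ge 0$; if $Yv=0$ then $v=0$, contradiction, so $Yv\ne 0$ and is $\ge 0$. Since $v$ is mixed-sign, Theorem 2.1 yields nonnegative invertible $B$ with $Bv=X^{-1}(\text{any nonzero }w)$; actually choose $w>0$ arbitrary and let $u=X^{-1}w$, which is a fixed nonzero vector, and get nonnegative invertible $B$ with $Bv=u$, i.e. $XBv=w>0$. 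Hmm, but I need $B$ to be $A^{-1}$ for minimally semipositive $A=B^{-1}$, and then look at $XAY = XB^{-1}Y$, whose being semipositive requires a nonnegative $x$ with $XB^{-1}Yx>0$. This still needs care. I would in the writeup set $A = B^{-1}$ where $B\geq 0$ invertible with $B(Yv)$ chosen... The honest statement for this plan: I expect to spend most of the proof constructing, from Lemma 2.2's vector, an explicit $A$ (via Theorem 2.1 used to prescribe a column or the inverse of $A$) such that $XAY$ either fails to be semipositive or fails to have a nonnegative inverse, thereby contradicting that $L$ preserves minimal semipositivity; the symmetric argument handles $X$, and a final one-paragraph sign-consistency check (using that $XY=L(I)$ is inverse nonnegative, so $Y^{-1}X^{-1}\ge0$, which is incompatible with $X^{-1}\ge0$ but $-Y^{-1}\ge 0$ unless $n$... actually $Y^{-1}X^{-1}\ge 0$ with $X^{-1}\ge0,-Y^{-1}\ge0$ forces $Y^{-1}X^{-1}\ge0$ and $-Y^{-1}X^{-1} = (-Y^{-1})X^{-1}\ge 0$, so $Y^{-1}X^{-1}=0$, impossible) rules out mixed signs and completes the proof.
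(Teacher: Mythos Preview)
Your ``if'' direction and invertibility argument are fine, and your final sign-consistency paragraph (using $L(I)=XY$ inverse nonnegative to rule out the mixed case $X^{-1}\geq 0$, $-Y^{-1}\geq 0$) is correct. The genuine gap is in the core construction: you apply Lemma~\ref{key1} to $Y$, obtaining a mixed-sign $v$ with $Yv\geq 0$, and then try to build $A=B^{-1}$ from a nonnegative invertible $B$ with $Bv$ prescribed. But that controls $B$ on $v$, whereas what you need to control is $A=B^{-1}$ acting on $Yv$; these are unrelated, which is exactly why your computations of $L(A)v$ or $(XAY)^{-1}$ never close. This is not just bookkeeping---applying Lemma~\ref{key1} to $Y$ really does leave you without a handle on $XA(Yv)$.

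The paper's proof applies Lemma~\ref{key1} to $X$ instead: get $v$ mixed-sign with $Xv\geq 0$, pick any $w\ngeq 0$, and use Theorem~\ref{np} (mixed-sign source $v$, nonzero target $Yw$) to produce nonnegative invertible $B$ with $Bv=Yw$. Setting $A=B^{-1}$ gives $L(A)w=XAYw=X(B^{-1}Bv)=Xv\geq 0$ with $w\ngeq 0$, contradicting $(L(A))^{-1}\geq 0$. So $X$ or $-X$ is inverse nonnegative. The second step---showing $Y$ inverse nonnegative once $X$ is---uses Theorem~\ref{pos} (not Theorem~\ref{np}): take $w>0$ with $u=Y^{-1}w\ngeq 0$, set $v=X^{-1}w\geq 0$, get nonnegative invertible $B$ with $Bv=w$, and then $L(B^{-1})u=w>0$ forces $u\geq 0$. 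You never invoke Theorem~\ref{pos}, and without it (or a transpose trick to swap the roles of $X$ and $Y$) the $Y$ step does not go through.
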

\begin{proof}
Suppose that $A$ is minimal semipositive. Thus $A^{-1}$ exists and $A^{-1}\geq 0$.
If $X$ and $Y$ are inverse nonnegative,  then $(XAY)^{-1}=Y^{-1}A^{-1}X^{-1}$ is nonnegative.
Thus $L(A)$ is minimal semipositive.

Conversely, suppose that $L(A)$ is an into linear preserver of minimal semipositive matrices. First, note that $X$ and $Y$
must be invertible. If either $X$ or $Y$ is not invertible, then $XAY$ is not invertible. Now, suppose that neither $X$ nor $-X$
is inverse nonnegative. Then, by Lemma \ref{key1}, there exists some vector $v \ngeq 0$ and $v\nleq 0$ such that $Xv\geq 0$.
Let $w\ngeq 0$, and consider $Yw$. By Theorem \ref{np}, there exists nonnegative invertible matrix $B$ such that $Bv=Yw$.
Let $A=B^{-1}$. Then $A$ is minimal semipositive. Now $L(A)w=XAYw=XABv=Xv\geq 0$. But $w\ngeq 0$, a contradiction.
Thus $X$ or $-X$ is inverse nonnegative.\\
Now, suppose that $X$ is inverse nonnegative and let $Y$ is not inverse nonnegative.
Then there exists some vector $w> 0$ such that $u=Y^{-1}w \ngeq 0$. Let $v=X^{-1}w$. Then $v \geq 0$ and $Xv>0$.
By Theorem \ref{pos}, there exists nonnegative invertible matrix $B$ such that $Bv=w$. Let $A=B^{-1}$.
Then $A$ is minimal semipositive. Now $L(A)u=XAY(Y^{-1}w)=XAw=Xv>0$. Since $L(A)^{-1} \geq 0$, we have $u \geq 0$,
a contradiction.  Thus $Y$ must be inverse nonnegative. Similarly if $-X$ is inverse nonnegative then $-Y$ is inverse nonnegative.
\end{proof}
Now, we present a necessary and sufficient condition for $L(A)$ to be an onto linear preserver of  minimal semipositive matrices, when $m=n$. First, we mention a result that will be used in the proof.
\begin{theorem}\label{msp-basis}\cite[Theorem 3.1]{prokcs5}
For $m \geq n$, the set of $m \times n$ minimal semipositive matrices contains a basis for the set of all $m \times n$ matrices.
\end{theorem}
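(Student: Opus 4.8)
The plan is to prove the stronger assertion that the set $\mathcal{M}$ of $m\times n$ minimal semipositive matrices has nonempty interior in $\mathbb{R}^{m\times n}$. This suffices, since a nonempty open subset of a finite-dimensional real vector space always contains a basis of that space: an open ball around a matrix $A$ contains $A$ and, for each of the $mn$ elementary matrices $E$ and each small $\varepsilon>0$, the matrix $A+\varepsilon E$; the pairwise differences of these $mn+1$ matrices are the $\varepsilon E$, which span $\mathbb{R}^{m\times n}$, so a basis can be extracted from them.

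To locate an interior point of $\mathcal{M}$ I would use the characterization recalled in Section~1: for $m\ge n$, a matrix $A$ is minimal semipositive if and only if $A$ is semipositive and $A$ has a nonnegative left inverse. Semipositivity is an open condition, because if $x>0$ and $Ax>0$ then $A'x>0$ for every $A'$ sufficiently close to $A$. The delicate point, and the main obstacle, is that the property of possessing a \emph{nonnegative} left inverse need not survive an arbitrarily small perturbation, since a zero entry of the left inverse can be pushed negative. I would get around this by insisting on a \emph{strictly positive} left inverse: if $B>0$ and $BA=I_n$, then for $A'$ near $A$ the matrix $I_n+B(A'-A)$ is invertible and $B':=\bigl(I_n+B(A'-A)\bigr)^{-1}B$ is a left inverse of $A'$ that is close to $B$, hence still strictly positive. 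So it is enough to produce a single $A\in\mathcal{M}$ that admits a strictly positive left inverse.

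For the construction I would take $C:=(I_n+J_n)^{-1}$, where $J_n$ is the $n\times n$ all-ones matrix; note that $I_n+J_n$ is entrywise positive with eigenvalues $n+1$ and $1$, so $C$ is well defined and $C^{-1}=I_n+J_n>0$. If $m=n$, set $A:=C$; then $A^{-1}=I_n+J_n\ge 0$, so $A$ is minimal semipositive, and $A^{-1}>0$ is a strictly positive left inverse. If $m>n$, let $G$ be the $n\times(m-n)$ and $P$ the $(m-n)\times n$ all-ones matrices and set $A:=\begin{pmatrix}C\\ PC\end{pmatrix}\in\mathbb{R}^{m\times n}$. Writing $\mathbf 1_n=(1,\dots,1)^T$ and $x:=C^{-1}\mathbf 1_n=(n+1)\mathbf 1_n>0$, we get $Cx=\mathbf 1_n>0$ and $PCx=P\mathbf 1_n>0$, so $Ax>0$ and $A$ is semipositive. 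Moreover, for $0<\varepsilon<1/(m-n)$ the matrix $B:=\bigl(\,C^{-1}-\varepsilon GP\ \big|\ \varepsilon G\,\bigr)\in\mathbb{R}^{n\times m}$ satisfies $BA=(C^{-1}-\varepsilon GP)C+\varepsilon G(PC)=C^{-1}C=I_n$, and since every entry of $C^{-1}=I_n+J_n$ is at least $1$ while every entry of $GP$ equals $m-n$, the matrix $B$ is entrywise positive. Hence $A$ is minimal semipositive with a strictly positive left inverse, so by the previous paragraph an entire neighbourhood of $A$ lies in $\mathcal{M}$, and therefore $\mathcal{M}$ contains a basis of $\mathbb{R}^{m\times n}$.
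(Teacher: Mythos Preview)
Your argument is correct. The paper itself does not prove this statement at all: it is quoted as \cite[Theorem 3.1]{prokcs5} and used as a black box in the proof of Theorem~\ref{mainresult2}. So there is no ``paper's own proof'' to compare against, and your write-up would supply a self-contained argument where the paper simply cites.

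As for the method, your route---show that the set of minimal semipositive matrices has nonempty interior, then observe that any nonempty open set in $\mathbb{R}^{m\times n}$ contains a basis---is clean and robust. The key technical point, that possession of a \emph{strictly positive} left inverse is an open condition (via $B'=(I_n+B(A'-A))^{-1}B$), is handled correctly, and your explicit interior point $A$ built from $C=(I_n+J_n)^{-1}$ checks out in both the square and the tall case: $A$ is semipositive with positive semipositivity vector $(n+1)\mathbf{1}_n$, and the left inverse $B=(C^{-1}-\varepsilon GP\mid \varepsilon G)$ is entrywise positive for $0<\varepsilon<1/(m-n)$ since every entry of $C^{-1}=I_n+J_n$ is at least $1$. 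The final step, extracting a basis from the $mn+1$ matrices $A$ and $A+\varepsilon E_{ij}$ lying in the open ball, is standard. One cosmetic remark: you could shorten the $m>n$ construction slightly by taking $A=\begin{pmatrix}I_n\\ P\end{pmatrix}$ with $P$ the $(m-n)\times n$ all-ones matrix and $B=(I_n-\varepsilon GP\mid \varepsilon G)$; this avoids introducing $C$ but changes nothing essential.
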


\begin{theorem}\label{mainresult2}
Let $X,Y \in \mathbb{R}^{n \times n}$. Let $L(A)=XAY$ be a linear operator on $\mathbb{R}^{n \times n}$. Then $L(A)$ is an onto linear preserver of minimal semipositive matrices if and only if $X,Y$ are monomial matrices or $-X, -Y$ are monomial matrices.
\end{theorem}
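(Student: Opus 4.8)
The plan is to reduce the statement to Theorem~\ref{mainresult1} via Lemma~\ref{basis-onto-into}. By Theorem~\ref{msp-basis} applied with $m=n$, the set $S$ of $n\times n$ minimal semipositive matrices contains a basis of $\mathbb{R}^{n\times n}$, so Lemma~\ref{basis-onto-into} applies: $L$ is an onto linear preserver of $S$ if and only if $L$ and $L^{-1}$ are both into linear preservers of $S$. Note that if $L$ is onto, then $X$ and $Y$ are necessarily invertible (otherwise $XAY$ is singular and cannot be minimal semipositive), so $L^{-1}(A)=X^{-1}AY^{-1}$ is again an operator of the same shape, to which Theorem~\ref{mainresult1} can be applied.

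For the converse direction, suppose $X,Y$ are monomial. Then $X,Y\geq 0$ are invertible with $X^{-1},Y^{-1}\geq 0$, so all four matrices $X,Y,X^{-1},Y^{-1}$ are inverse nonnegative. By Theorem~\ref{mainresult1} applied first to $L$ and then to $L^{-1}$, both $L$ and $L^{-1}$ are into preservers of $S$, hence $L$ is onto by Lemma~\ref{basis-onto-into}; the case ``$-X,-Y$ monomial'' is identical. For the forward direction, assume $L$ is onto. Then $L$ and $L^{-1}$ are into preservers, so Theorem~\ref{mainresult1} applied to $L$ gives that $X,Y$ are inverse nonnegative or $-X,-Y$ are inverse nonnegative, while Theorem~\ref{mainresult1} applied to $L^{-1}$ gives that $X^{-1},Y^{-1}$ are inverse nonnegative (equivalently $X,Y\geq 0$) or $-X^{-1},-Y^{-1}$ are inverse nonnegative (equivalently $-X,-Y\geq 0$). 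It then remains to combine these two dichotomies into ``$X,Y$ monomial or $-X,-Y$ monomial''.

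The one extra ingredient needed for that last step --- and the place where I expect the only real subtlety, and presumably where \cite{john-main} erred --- is the elementary observation that a nonnegative invertible matrix cannot have a nonpositive inverse: if $M\geq 0$ is invertible with $M^{-1}\leq 0$, then reading off the $(i,i)$ entry of $MM^{-1}=I$ yields $1=\sum_k M_{ik}(M^{-1})_{ki}\leq 0$, a contradiction. Applying this with $M=X$ rules out the mixed case ``$X$ inverse nonnegative and $-X\geq 0$'', and applying it with $M=-X$ rules out ``$-X$ inverse nonnegative and $X\geq 0$'' (similarly for $Y$). Hence either $X,Y$ are inverse nonnegative with $X,Y\geq 0$, or $-X,-Y$ are inverse nonnegative with $-X,-Y\geq 0$. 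In the first case, $X\geq 0$ invertible has no zero row, so $X$ is row positive, and being also inverse nonnegative it is monomial by the characterization of monomial matrices recalled in Section~1; the same applies to $Y$. In the second case the identical reasoning applied to $-X$ and $-Y$ finishes the proof. Apart from this sign bookkeeping, the argument is routine.
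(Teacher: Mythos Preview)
Your proof is correct and follows essentially the same route as the paper: reduce to Theorem~\ref{mainresult1} via Theorem~\ref{msp-basis} and Lemma~\ref{basis-onto-into}, then combine the conclusions for $L$ and $L^{-1}$. You actually supply more detail than the paper, which simply asserts the combined conclusion; your explicit elimination of the mixed sign cases and your appeal to the row-positive/inverse-nonnegative characterization of monomial matrices make the argument watertight.

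One small remark: your parenthetical guess that this sign bookkeeping is ``presumably where \cite{john-main} erred'' is off. The error the paper flags in \cite{john-main} concerns the semipositive preserver result (Theorem~2.4 there), not the minimal semipositive one, and is of a different nature (an unjustified claim that $Xv$ must have a zero entry for some $v>0$). Nothing in your argument depends on this aside, so you can simply drop it.
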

\begin{proof}
Let $L$ be an onto linear preserver of minimal semipositive matrices.
Then $L$ is into linear preserver of minimal semipositive matrices. Since, by Theorem \ref{msp-basis},
the set of all minimal semipositive matrices contains a basis for the set of all $n \times n$ matrices,
by Lemma \ref{basis-onto-into}, we have onto preservers are same as the invertible into preservers of minimal semipositive
matrices whose inverse is also an  into preserver of minimal semipositive matrices. Thus, by Theorem \ref{mainresult1}, the matrices $X, Y$ are nonnegative
and inverse nonnegative or the matrices $-X, -Y$ are nonnegative and inverse nonnegative. 
Hence X, Y are monomial matrices or - X,-Y are monomial matrices. Converse is easy to verify.
\end{proof}
Next, we would like to point out an error in the proof of Theorem 2.4 \cite{john-main}.
Let $ X\in \mathbb{R}^{n \times n}$. Suppose that neither $X$ nor $-X$ is monomial.
The authors in \cite{john-main} used the fact that there exists a vector $v>0$ such that $Xv$ has some zero entry.
The following example shows that this is incorrect.

\begin{ex}
Let $X = \left(
\begin{array}{ccc}
1 & 0 & 0 \\
0 & -1 & 0\\
1  & 1 & 1\end{array} \right).$ Then neither $X$ nor $-X$ is monomial and there does not exists any $v>0$ such that $Xv$ contains a zero entry.
\end{ex}
Let $e$ denote the vector in $\mathbb{R}^n$ whose all entries are 1. In the next result, we give a correct proof of Theorem 2.4 \cite{john-main}.

\begin{theorem}
Let $X\in \mathbb{R}^{m \times m}$ and $Y \in \mathbb{R}^{n \times n}$. Let $L(A)=XAY$ be a linear operator
on $\mathbb{R}^{m \times n}$. Then $L(A)$ is an into linear preserver of semipositive matrices if and only if either
$X$ is row positive and $Y$ is inverse nonnegative or $-X$ is row positive and $-Y$ is inverse nonnegative.
\end{theorem}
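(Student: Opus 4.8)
The plan is to prove both implications directly, probing the operator $L$ on rank-one matrices only; this avoids the incorrect assertion about non-monomial matrices that was used in \cite{john-main}. Throughout I write $\mathbf{1}_k$ for the all-ones vector in $\mathbb{R}^k$, and I will use repeatedly the elementary fact that for $p\in\mathbb{R}^m$ and $q\in\mathbb{R}^n$ the rank-one matrix $pq^T$ is semipositive precisely when either $p>0$ and $q$ has a positive entry, or $p<0$ and $q$ has a negative entry (indeed $(pq^T)x=(q^Tx)p$, so $p$ must be of one strict sign, and one can always choose $x>0$ making $q^Tx$ of the required sign). Note also that $L(pq^T)=X(pq^T)Y=(Xp)(Y^Tq)^T$ is again rank one.

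For the ``if'' direction, suppose $X$ is row positive and $Y$ is inverse nonnegative, and let $A$ be semipositive with semipositivity vector $x>0$. I would put $z=Y^{-1}x$; then $z\ge 0$ (as $Y^{-1}\ge 0$), and $XAYz=XAx>0$ because $Ax>0$, $X\ge 0$ and no row of $X$ is zero. Hence $XAY$ is semipositive. The alternative ``$-X$ row positive, $-Y$ inverse nonnegative'' reduces to this via $XAY=(-X)A(-Y)$.

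For the ``only if'' direction, assume $L$ preserves semipositivity. The first step is to apply $L$ to the all-ones matrix $\mathbf{1}_m\mathbf{1}_n^T$, which is semipositive: since $L(\mathbf{1}_m\mathbf{1}_n^T)=(X\mathbf{1}_m)(Y^T\mathbf{1}_n)^T$ must be semipositive, the rank-one fact forces $X\mathbf{1}_m$ to have all entries of one strict sign, and after replacing $(X,Y)$ by $(-X,-Y)$ if necessary — which changes neither $L$ nor the two-alternative form of the conclusion — I may assume $X\mathbf{1}_m>0$. The second step extracts $Y$: for any $q\in\mathbb{R}^n$ with a positive entry, $\mathbf{1}_m q^T$ is semipositive, so $(X\mathbf{1}_m)(Y^Tq)^T$ is semipositive, and since $X\mathbf{1}_m>0$ the rank-one fact forces $Y^Tq$ to have a positive entry. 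Equivalently, $Y^Tq\le 0$ implies $q\le 0$; applying this to $q$ and to $-q$ shows $Y^Tq=0$ implies $q=0$, so $Y^T$ (hence $Y$) is invertible, and substituting $q=(Y^T)^{-1}s$ shows $(Y^T)^{-1}$ maps the nonnegative orthant into itself, i.e.\ $Y^{-1}=((Y^T)^{-1})^T\ge 0$. The third step returns to $X$: take $q=(Y^{-1})^T\mathbf{1}_n$, which is $>0$ (each column of $Y^{-1}$ is nonnegative and nonzero) and satisfies $Y^Tq=\mathbf{1}_n$; then for every $p>0$ the matrix $(Xp)\mathbf{1}_n^T$ must be semipositive, which by the rank-one fact forces $Xp>0$. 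Letting $p$ tend to each standard basis vector of $\mathbb{R}^m$ gives that every column of $X$ is nonnegative, so $X\ge 0$, and with $X\mathbf{1}_m>0$ this shows $X$ is row positive.

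The step I expect to need the most care is the second one — deducing invertibility of $Y$ and then $Y^{-1}\ge 0$ from the single implication ``$q$ has a positive entry $\Rightarrow Y^Tq$ has a positive entry'' — together with carrying the normalization $X\mathbf{1}_m>0$ consistently to the end; everything else is a routine instantiation of the rank-one criterion. An alternative, parallel to the proof of Theorem~\ref{mainresult1}, would run the converse through Theorems~\ref{np} and~\ref{pos}, but the rank-one argument is shorter and makes explicit why the flaw in \cite{john-main} is immaterial to the statement.
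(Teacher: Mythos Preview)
Your argument is correct and is genuinely different from the paper's. The paper proves the converse by a three-case analysis on the rows of $X$ (zero row; mixed-sign row; coexistence of a nonpositive and a nonnegative row) to force $X$ or $-X$ row positive, and then handles $Y$ by two separate contradiction arguments (first nonsingularity, then $Y^{-1}\ge 0$) using matrices whose rows are copies of a single vector. You instead probe $L$ exclusively on rank-one inputs and repeatedly invoke the single criterion ``$pq^T$ is semipositive iff $p>0$ and $q$ has a positive entry, or $p<0$ and $q$ has a negative entry''; this yields in one stroke the sign normalization of $X\mathbf{1}_m$, then the implication $Y^Tq\le 0\Rightarrow q\le 0$ (hence $Y$ invertible and $Y^{-1}\ge 0$), and finally $Xp>0$ for all $p>0$ (hence $X\ge 0$ and row positive). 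Your route is shorter, avoids case splitting, and makes transparent that only rank-one tests are needed, which also explains conceptually why the flawed step in \cite{john-main} was inessential; the paper's route, by contrast, stays closer to explicit counterexamples and does not rely on the rank-one characterization or a limiting argument. Either approach is fully adequate here.
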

\begin{proof}
Suppose that $A$ is semipositive. Then there exists a semipositivity vector $v$ such that $Av>0$. If $X$ is row positive and $Y$ is inverse nonnegative, then $u=Y^{-1}v>0$ and $XAYu>0$. Thus $L(A)$ is semipositive. If $-X$ is row positive and $-Y$ is inverse nonnegative, then $L(A)=(-X)A(-Y)=XAY$ is semipositive.

Conversely, suppose that $L(A)$ is an into linear preserver of semipositive matrices. Suppose that neither $X$ nor $-X$ is row positive.

Case (i): If $X$ contains a zero row, then $XAY$ contains a zero row for any $A$ and $Y$. Thus $XAY$ is not semipositive.

Case (ii): If $X$ contains a non-zero row $i$ with both positive and negative entries, then we can construct a positive vector $v$ such that $(Xv)_i=0$. Set $A=(v,\ldots,v)$. Then $A$ is semipositive and $XAY$ contains a zero row and hence $XAY$ is not semipositive.

Case (iii): If $X$ does not contains zero row or non-zero row such that  it contains both positive and negative entries.
Then there exists a row which is non positive. Since, $-X$ is also not row positive, so $X$ contains a nonnegative row.
Without loss of generality, assume that first row of $X$ is non positive and second row of $X$ is nonnegative.
Let $A=(e,0,\ldots, 0)$. Then $A$ is semipositive and $XA=(Xe,0,\ldots,0)$. Let $Xe=(\alpha_1,\alpha_2,\ldots, \alpha_n)^T$. Then $\alpha_1<0$ and $\alpha_2>0$. Let $u\in \mathbb{R}^n$ and $Yu=v$. Then $XAYu=(\alpha_1v_1,\alpha_2v_1,\ldots, \alpha_nv_1)$. Thus $XAYu$ never be positive. Thus $XAY$ is not semipositive.
Hence either $X$ or $-X$ is row positive.

Now, suppose that $X$ is row positive. We first prove that $Y$ is invertible. Let $v=(v_1,\ldots , v_n)^T$ be
such that $v^TY=0$. Suppose that $v\neq 0$. With out loss of generality, we can  assume that $v_1>0$. Let $A$ be the matrix
whose rows are the row vector $v^T$. Since the first column of $A$ is positive, the matrix $A$ is semipositive. However,
by definition the matrix $AY$ is zero so that $XAY$ is not semipositive. This is a contradiction. Hence $Y$ is invertible.
Next, let us show that $C=Y^{-1}\geq 0$. On  contrary, suppose that $c_{ij}<0$ for some $i,j$. Let $A$ be the matrix whose
rows are the negative of the $i^{th}$ row of $C$. Since the $j^{th}$ column of $A$ is positive, the matrix $A$ is
semipositive. However the matrix $AY$ is nonpositive and hence $XAY$ is not semipositive, contradiction to the assumption.
Thus $Y^{-1}\geq 0$. A similar argument shows that if $-X$ is row positive, then $-Y$ is inverse nonnegative.
\end{proof}


\bibliographystyle{amsplain}
\bibliography{semi-preserver}
\end{document}